\documentclass{amsart}

\sloppy

\usepackage{amsfonts}
\usepackage{amsmath}
\usepackage{amssymb}
\usepackage{latexsym}
\usepackage{amscd}
\usepackage{amsthm}
\usepackage{subfigure}
\usepackage{graphicx}
\usepackage[all]{xy}
\usepackage{color}

\newtheorem{thm}{Theorem}[section]
\newtheorem{cor}[thm]{Corollary}
\newtheorem{prob}[thm]{Problem}
\theoremstyle{definition}

\newtheorem{rem}[thm]{Remark}

\newcommand{\M}{\mathcal{M}}
\newcommand{\T}{\mathcal{T}}
\newcommand{\Y}{\mathcal{Y}}
\newcommand{\U}{\mathcal{U}}

\author[R. Kobayashi]{Ryoma Kobayashi}
\address{
Department of General Education,\endgraf
National Institute of Technology, Ishikawa College,\endgraf
Tsubata, Ishikawa, 929-0392, Japan
}
\email{kobayashi\_ryoma@ishikawa-nct.ac.jp}

\thanks{2020 \textit{Mathematics Subject Classification}. 20F05, 57M07}

\thanks{\textit{Key words and phrases}. mapping class group, presentation}

\thanks{The author was supported by JSPS KAKENHI Grant Number JP19K14542 and 22K13920.}

\begin{document}

\title[Simple infinite presentations for $\M(N_{g,n})$]
{Simple infinite presentations for the mapping class group of a compact non-orientable surface}

\maketitle

\begin{abstract}
Omori and the author~\cite{KO1} have given an infinite presentation for the mapping class group of a compact non-orientable surface.
In this paper, we give more simple infinite presentations for this group.
\end{abstract}

\section{Introduction}

For $g\geq1$ and $n\geq0$, we denote by $N_{g,n}$ a surface obtained by removing $n$ disjoint open disks from a connected sum of $g$ real projective planes, and call this surface a compact non-orientable surface of genus $g$ with $n$ boundary components.
We can regard $N_{g,n}$ as a surface obtained by attaching $g$ M\"obius bands to $g$ boundary components of a sphere with $g+n$ boundary components, as shown in Figure~\ref{non-ori-surf}.
We call these attached M\"obius bands \textit{crosscaps}.

\begin{figure}[htbp]
\includegraphics{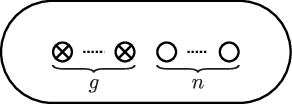}
\caption{A model of a non-orientable surface $N_{g,n}$.}\label{non-ori-surf}
\end{figure}

The \textit{mapping class group} $\M(N_{g,n})$ of $N_{g,n}$ is defined as the group consisting of isotopy classes of all diffeomorphisms of $N_{g,n}$ that fix the boundary pointwise.
$\M(N_{1,0})$ and $\M(N_{1,1})$ are trivial (see \cite{E}).
Finite presentations for $\M(N_{2,0})$, $\M(N_{2,1})$, $\M(N_{3,0})$ and $\M(N_{4,0})$ ware given by \cite{Li1}, \cite{St1}, \cite{BC} and \cite{Sz} respectively.
Paris-Szepietowski~\cite{PS} gave a finite presentation of $\M(N_{g,n})$ with \textit{Dehn twists} and \textit{crosscap transpositions} for $g+n>3$ with $n\leq1$.
Stukow~\cite{St2} gave another finite presentation of $\M(N_{g,n})$ with Dehn twists and one \textit{crosscap slide} for $g+n>3$ with $n\leq1$, applying Tietze transformations for the presentation of $\M(N_{g,n})$ given in \cite{PS}.
Omori~\cite{O} gave an infinite presentation of $\M(N_{g,n})$ with all Dehn twists and all crosscap slides for $g\geq1$ and $n\leq1$, using the presentation of $\M(N_{g,n})$ given in \cite{St2}, and then, following this work, Omori and the author~\cite{KO1} gave an infinite presentation of $\M(N_{g,n})$ with all Dehn twists and all crosscap slides for $g\geq1$ and $n\geq2$.
In this paper, we give four more simple infinite presentations of $\M(N_{g,n})$ with all Dehn twists and all crosscap slides, and with all Dehn twists and all crosscap transpositions.

Through this paper, the product $gf$ of mapping classes $f$ and $g$ means that we apply $f$ first and then $g$.
Moreover we do not distinguish a simple closed curve from its isotopy class.

We define a \textit{Dehn twist}, a \textit{crosscap slide} and a \textit{crosscap transposition} which are elements of $\M(N_{g,n})$.
For a simple closed curve $c$ of $N_{g,n}$, a regular neighborhood of $c$ is either an annulus or a M\"obius band.
We call $c$ a \textit{two sided} or a \textit{one sided} simple closed curve respectively.
For a two sided simple closed curve $c$, we can take two orientations $+_c$ and $-_c$ of a regular neighborhood of $c$.
The \textit{right handed Dehn twist} $t_{c;\theta}$ about a two sided simple closed curve $c$ with respect to $\theta\in\{+_c,-_c\}$ is the isotopy class of the map as shown in Figure~\ref{dehn-slide-trans}~(a).
Note that for an oriented two sided simple closed curve $\alpha$ and $\theta\in\{+_\alpha,-_\alpha\}$, we regard $t_{\alpha,\theta}=t_{\alpha^{-1},\theta}$, where $\alpha^{-1}$ is the inverse loop of $\alpha$.
We write $t_{c;\theta}=t_c$ if the orientation $\theta$ is given explicitly, that is, the direction of the twist is indicated by an arrow written beside $c$ as shown in Figure~\ref{dehn-slide-trans}~(a).
For a one sided simple closed curve $\mu$ of $N_{g,n}$ and an oriented two sided simple closed curve $\alpha$ of $N_{g,n}$ such that $N_{g,n}\setminus\{\alpha\}$ is non-orientable when $g\geq3$ and that $\mu$ and $\alpha$ intersect transversely at only one point, the \textit{crosscap slide} $Y_{\mu,\alpha}$ about $\mu$ and $\alpha$ is the isotopy class of the map described by pushing the crosscap which is a regular neighborhood of $\mu$ once along $\alpha$, as shown in Figure~\ref{dehn-slide-trans}~(b).
The \textit{crosscap transposition} $U_{\mu,\alpha;\theta}$ about $\mu$ and $\alpha$ with respect to $\theta\in\{+_\alpha,-_\alpha\}$ is defined as $U_{\mu,\alpha;\theta}=t_{\alpha;\theta}Y_{\mu,\alpha}$, as shown in Figure~\ref{dehn-slide-trans}~(b).
Note that $Y_{\mu,\alpha}$ and $U_{\mu,\alpha;\theta}$ can not be defined when $g=1$.
We have $Y_{\mu,\alpha}(\alpha)=\alpha$ and $(Y_{\mu,\alpha})_\ast(\pm_\alpha)=\mp_\alpha$, where $f_\ast(\theta)$ is the orientation of a regular neighborhood of $f(c)$ induced from $\theta\in\{+_c,-_c\}$, for a two sided simple closed curve $c$ of $N_{g,n}$ and $f\in\M(N_{g,n})$.
Since $t_{\alpha;\theta}(\alpha)=\alpha$ and $(t_{\alpha;\theta})_\ast(\pm_\alpha)=\pm_\alpha$, we also have $U_{\mu,\alpha;\theta}(\alpha)=\alpha$ and $(U_{\mu,\alpha;\theta})_\ast(\pm_\alpha)=\mp_\alpha$.

\begin{figure}[htbp]
\subfigure[The Dehn twist $t_{c;\theta}=t_{c}$ about $c$ with respect to $\theta\in\{+_\alpha,-_\alpha\}$.]{\includegraphics{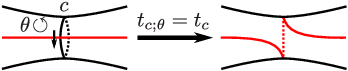}}

\subfigure[The crosscap slide $Y_{\mu,\alpha}$ about $\mu$ and $\alpha$, and the crosscap transposition $U_{\mu,\alpha:\theta}$ about $\mu$ and $\alpha$ with respect to $\theta\in\{+_\alpha,-_\alpha\}$.
The action by $U_{\mu,\alpha:\theta}$ causes the two crosscaps to swap.]{\includegraphics{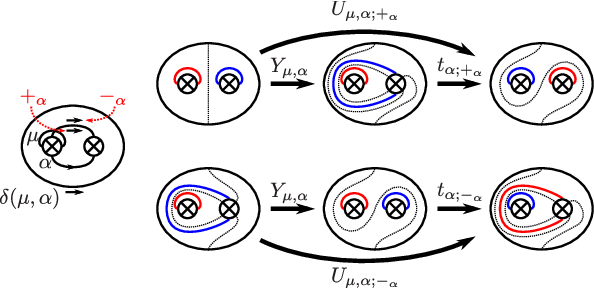}}
\caption{}\label{dehn-slide-trans}
\end{figure}

Let $c_1,\dots,c_k$, $c_0$, $c_0^\prime$ and $d_1,\dots,d_7$ be simple closed curves with arrows of a surface as shown in Figure~\ref{chain-lantern}.
$\M(N_{g,n})$ admits following relations.
\begin{itemize}
\item	\begin{itemize}
	\item	$(t_{c_1}t_{c_2}\cdots{}t_{c_k})^{k+1}=t_{c_0}t_{c_0^\prime}$ if $k$ is odd,\\
	\item	$(t_{c_1}t_{c_2}\cdots{}t_{c_k})^{2k+2}=t_{c_0}$ if $k$ is even.
	\end{itemize}
\item	$t_{d_1}t_{d_2}t_{d_3}=t_{d_4}t_{d_5}t_{d_6}t_{d_7}$.
\end{itemize}
These relations are called a \textit{$k$-chain relation} and a \textit{lantern relation} respectively.

\begin{figure}[htbp]
\includegraphics{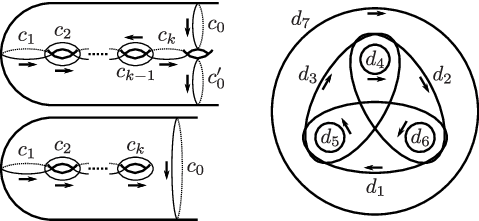}
\caption{Simple closed curves $c_1,\dots,c_k$, $c_0$, $c_0^\prime$ and $d_1,\dots,d_7$ with arrows of a surface.}\label{chain-lantern}
\end{figure}

Let $\T$, $\Y$ and $\U\subset\M(N_{g,n})$ denote the sets consisting of all Dehn twists, all crosscap slides and all crosscap transpositions respectively.
Our main results are as follows.

\begin{thm}\label{main-thm-1}
For $g\geq1$ and $n\geq0$, $\M(N_{g,n})$ admits a presentation with a generating set $\T\cup\Y$.
The defining relations are
\begin{enumerate}
\item	$t_{c;\theta}=1$ if $c$ bounds a disk or a M\"obius band,
\item	\begin{enumerate}
	\item	$t_{c;+_c}^{-1}=t_{c;-_c}$ for any $t_{c;+_c}\in\T$,
	\end{enumerate}
\item	\begin{enumerate}
	\item	$ft_{c;\theta}f^{-1}=t_{f(c);f_\ast(\theta)}$ for any $t_{c;\theta}\in\T$ and $f\in\T\cup\Y$,
	\item	$fY_{\mu,\alpha}f^{-1}=Y_{f(\mu),f(\alpha)}$ for any $Y_{\mu,\alpha}\in\Y$ and $f\in\T$,
	\end{enumerate}
\item	all the $2$-chain relations,
\item	all the lantern relations and
\item	$Y_{\mu,\alpha}^2=t_{\delta(\mu,\alpha)}$ for any $Y_{\mu,\alpha}\in\Y$, where $\delta(\mu,\alpha)$ is a simple closed curve with an arrow determined by $\mu$ and $\alpha$ as shown in Figure~\ref{dehn-slide-trans}~(b).
\end{enumerate}
\end{thm}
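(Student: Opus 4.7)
The plan is to apply Tietze transformations to the presentation of $\M(N_{g,n})$ given by Omori and the author in \cite{KO1}, which already takes $\T\cup\Y$ as its generating set. Each of the relations (1)--(6) is easily verified to hold in $\M(N_{g,n})$: relations (1), (2a) and (6) are elementary, (3a) and (3b) express the naturality of Dehn twists and crosscap slides under conjugation, and (4), (5) are the standard chain and lantern relations recorded above. Thus the group $G$ presented by (1)--(6) surjects onto $\M(N_{g,n})$, and it suffices to prove that every defining relation in \cite{KO1} is a consequence of (1)--(6).

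The first block of derivations extracts the standard geometric consequences of (3a). If two simple closed curves $c$ and $c'$ are disjoint then $t_{c'}(c)=c$, so (3a) forces the commutativity $t_ct_{c'}=t_{c'}t_c$. If $c$ and $c'$ meet transversely once, then comparing (3a) applied to conjugation by $t_c$ and by $t_{c'}$, and using (2a) to handle orientations, yields the braid relation $t_ct_{c'}t_c=t_{c'}t_ct_{c'}$. The analogous consequences in which one or both twists are replaced by a crosscap slide come from the remaining cases of (3) together with relation (6), which allows us to rewrite $Y_{\mu,\alpha}^{-1}$ as $t_{\delta(\mu,\alpha)}^{-1}Y_{\mu,\alpha}$ whenever a negative power of a crosscap slide would otherwise appear.

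The next block reduces the longer $k$-chain relations and the various ``global'' relations of \cite{KO1} to relations (4) and (5). In the orientable case, a classical argument expresses every $k$-chain relation as a product of $2$-chain and lantern relations; the same reduction applies here after restricting to an orientable subsurface bounded by two-sided curves. The relations of \cite{KO1} that mix Dehn twists with crosscap slides are then handled by a case analysis on the topological type of the supporting subsurface: in each case one identifies an explicit chain or lantern configuration inside the subsurface and conjugates it by an appropriate element of $\T\cup\Y$ via (3) to reach the desired relation.

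The main obstacle I anticipate is the bookkeeping required for the small-configuration relations of \cite{KO1}, in particular those supported on a once-holed Klein bottle or related subsurfaces, in which Dehn twists and crosscap slides interact in a genuinely non-orientable way; these include the implicit conjugation-of-$Y$-by-$Y$ identities that are not present in (3b). The key tool is relation (6) combined with (3a) applied to $f\in\Y$: squaring such a relation reduces it to an identity between Dehn twists to which (3a) applies directly, and the geometric identity $f(\delta(\mu,\alpha))=\delta(f(\mu),f(\alpha))$ lets the conclusion be lifted back to crosscap slides after careful orientation tracking via (2a). Once every relation in the \cite{KO1} presentation has been disposed of in this manner, the Tietze reduction is complete and $G\cong\M(N_{g,n})$.
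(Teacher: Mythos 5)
Your overall strategy (a Tietze reduction from the known infinite presentation with generating set $\T\cup\Y$) is legitimate in principle, but it is not the paper's route, and the step you yourself flag as the main obstacle contains a genuine gap. The problematic relations are exactly those in which a crosscap slide is conjugated by something outside the twist subgroup --- e.g.\ an identity of the form $Y_{\mu^\prime,\alpha^\prime}Y_{\mu,\alpha}Y_{\mu^\prime,\alpha^\prime}^{-1}=Y_{Y_{\mu^\prime,\alpha^\prime}(\mu),Y_{\mu^\prime,\alpha^\prime}(\alpha)}$, which is deliberately absent from relation (3)(b) (that relation only allows $f\in\T$). Your proposed tool is to square such an identity, reduce it via (6) and (3)(a) to an identity between Dehn twists, and then ``lift the conclusion back'' to the crosscap slides. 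That last step does not work: from $x^2=y^2$ in a group one cannot conclude $x=y$, and nothing in relations (1)--(6) supplies the missing cancellation. So the case analysis you defer is precisely where the proof has to happen, and the mechanism you offer for it is insufficient. (A secondary point: the presentation of \cite{KO1} that you take as your starting point is only stated for $n\geq2$; for $n\leq1$ you would need \cite{O} instead, so the base of your Tietze reduction does not cover all of $g\geq1$, $n\geq0$.)

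The paper avoids this difficulty entirely by not starting from \cite{KO1}. It starts instead from the presentation of the twist subgroup $\T(N_{g,n})$ given in \cite{KO2} (Theorem~\ref{IFP-T}), extends it through the short exact sequence $1\to\T(N_{g,n})\to\M(N_{g,n})\to\Z/2\to1$ to obtain a presentation with generating set $\T\cup\{Y_{\mu_0,\alpha_0}\}$ for a single fixed crosscap slide (Corollary~\ref{IFP-M-1}), and then checks that the natural maps $\varphi:G\to\M(N_{g,n})$ and $\psi:\M(N_{g,n})\to G$ are mutually inverse. The only nontrivial verification is $\psi(\varphi(Y_{\mu,\alpha}))=Y_{\mu,\alpha}$, and the key trick there is that any $Y_{\mu,\alpha}$ equals $fY_{\mu_0,\alpha_0}f^{-1}$ for some $f\in\M(N_{g,n})$, and if $f\notin\T(N_{g,n})$ one writes $f=hY_{\mu_0,\alpha_0}$ with $h\in\T(N_{g,n})$, whereupon $fY_{\mu_0,\alpha_0}f^{-1}=hY_{\mu_0,\alpha_0}h^{-1}$: the conjugation by $Y_{\mu_0,\alpha_0}$ cancels against the element being conjugated, so only conjugation by Dehn twists --- relation (3)(b) exactly as stated --- is ever needed. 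To salvage your approach you would need an analogous device for eliminating $Y$-by-$Y$ conjugations, not the squaring argument.
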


\begin{thm}\label{main-thm-2}
For $g\neq2$ and $n\geq0$, $\M(N_{g,n})$ admits a presentation with a generating set $\T\cup\Y$.
The defining relations are
\begin{enumerate}
\item	$t_{c;\theta}=1$ if $c$ bounds a disk or a M\"obius band,
\item	\begin{enumerate}
	\item	$t_{c;+_c}^{-1}=t_{c;-_c}$ for any $t_{c;+_c}\in\T$,
	\item	$Y_{\mu,\alpha}^{-1}=Y_{\mu,\alpha^{-1}}$ for any $Y_{\mu,\alpha}\in\Y$,
	\end{enumerate}
\item	\begin{enumerate}
	\item$ft_{c;\theta}f^{-1}=t_{f(c);f_\ast(\theta)}$ for any $t_{c;\theta}\in\T$ and $f\in\T\cup\Y$,
	\item	$fY_{\mu,\alpha}f^{-1}=Y_{f(\mu),f(\alpha)}$ for any $Y_{\mu,\alpha}\in\Y$ and $f\in\T$,
	\end{enumerate}
\item	all the $2$-chain relations and
\item	all the lantern relations.
\end{enumerate}
\end{thm}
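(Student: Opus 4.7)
My plan is to deduce Theorem~\ref{main-thm-2} from Theorem~\ref{main-thm-1} by a Tietze transformation. Write $G_1$ and $G_2$ for the abstract groups presented in Theorems~\ref{main-thm-1} and~\ref{main-thm-2} respectively. Both have generating set $\T\cup\Y$, and the relations (1), (2)(a), (3)(a)(b), (4) and (5) are common to the two presentations. By Theorem~\ref{main-thm-1} we have $G_1\cong\M(N_{g,n})$, and every relation appearing in Theorem~\ref{main-thm-2} holds in $\M(N_{g,n})$ geometrically — including the new relation (2)(b) $Y_{\mu,\alpha}^{-1}=Y_{\mu,\alpha^{-1}}$, since pushing the crosscap along $\alpha^{-1}$ undoes the push along $\alpha$. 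Hence there is a natural surjection $G_2\twoheadrightarrow\M(N_{g,n})\cong G_1$, and to complete the proof it suffices to build an inverse map. For that it is enough to derive, inside $G_2$, the one relation of Theorem~\ref{main-thm-1} absent from Theorem~\ref{main-thm-2}, namely $Y_{\mu,\alpha}^{2}=t_{\delta(\mu,\alpha)}$.

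The derivation I would attempt follows the general pattern for factoring a boundary Dehn twist via a lantern relation. Fix a crosscap slide $Y_{\mu,\alpha}$; a regular neighborhood $K$ of $\mu\cup\alpha$ is a Klein bottle with one hole, whose boundary is $\delta(\mu,\alpha)$. When $g\neq2$, the complement $N_{g,n}\setminus K$ retains enough genus to embed a four-holed sphere one of whose boundary curves is $\delta(\mu,\alpha)$. Applying the lantern relation (5) on that four-holed sphere rewrites $t_{\delta(\mu,\alpha)}$ as a product of six other Dehn twists. Each of these auxiliary twists can then be transported back into $K$ using the conjugation relations (3)(a)(b) with carefully chosen elements of $\T\cup\Y$ (using $Y_{\mu,\alpha}$-conjugations to identify certain of the twists with their images under the crosscap slide), and using the 2-chain relation (4) to absorb paired products $(t_a t_b)^3$ into single boundary twists. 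The target is to reduce the right-hand side of the lantern to an expression built from $Y_{\mu,\alpha}$ and $Y_{\mu,\alpha^{-1}}$, at which point relation (2)(b) collapses $Y_{\mu,\alpha^{-1}}$ into $Y_{\mu,\alpha}^{-1}$ and the expression closes up to $Y_{\mu,\alpha}^{2}$.

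The hypothesis $g\neq2$ enters exactly in the lantern step: for $g=1$ crosscap slides are undefined and the relation is vacuous, while for $g=2$ the Klein-bottle neighborhood $K$ essentially fills $N_{g,n}$ up to the extra boundary components, so no four-holed sphere witnessing a useful lantern fits. For $g\geq3$ the construction goes through for every crosscap slide in $\Y$. The main obstacle I anticipate is not the geometric setup but the bookkeeping of the arrows $\theta$ on every two-sided curve: relations (3)(a)(b) must be applied with the correct induced orientation $f_\ast(\theta)$ at each step, and the final collapse via (2)(b) only works once the orientation of the reversed $\alpha$ has been matched. I expect this is handled by fixing once and for all an explicit model subsurface in which all curves, arrows, and conjugating classes are named, reducing the whole derivation to a finite and explicit calculation.
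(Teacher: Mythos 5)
Your reduction is exactly the paper's: since every relation of Theorem~\ref{main-thm-2} holds in $\M(N_{g,n})$ and relations (1)--(5) of Theorem~\ref{main-thm-1} are literally contained among those of Theorem~\ref{main-thm-2}, the proof comes down to deriving $Y_{\mu,\alpha}^{2}=t_{\delta(\mu,\alpha)}$ (relation (6) of Theorem~\ref{main-thm-1}) inside the group presented in Theorem~\ref{main-thm-2}, for $g\geq3$. That framing, and the dismissal of $g=1$ as vacuous, are correct.

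The gap is that this derivation is the entire content of the proof, and you have only gestured at it. The paper carries it out as an explicit page-long computation in a fixed model (Figure~\ref{B2}): one factor of $Y_{\mu,\alpha}$ is rewritten via relation (3)(b) as $fY_{\mu,\alpha^{-1}}f^{-1}$ for the explicit word $f=t_\beta t_\alpha t_\gamma t_\beta t_\alpha t_\beta$ carrying $(\mu,\alpha^{-1})$ to $(\mu,\alpha)$; the other factor is replaced by $Y_{\mu,\alpha^{-1}}^{-1}$ using the new relation (2)(b); then relation (3)(a) with $f=Y_{\mu,\alpha^{-1}}$ eliminates all crosscap slides, leaving a pure Dehn-twist word that is massaged by braid relations into a product ending in $(t_\delta t_\alpha t_\beta)^4$ and collapsed by a $3$-chain relation and a lantern relation to $t_{\delta(\mu,\alpha)}$. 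None of this is automatic, and your sketch is off in at least one respect: the four-holed sphere supporting the lantern is not embedded in the complement of the Klein-bottle neighborhood $K$ of $\mu\cup\alpha$ --- the whole configuration lives in a subsurface containing $K$ together with a third crosscap, which is where the hypothesis $g\geq3$ actually enters (not because "no lantern fits in the complement" when $g=2$; for $g=2$ and $n$ large the complement of $K$ can certainly contain four-holed spheres, just not useful ones). Without the explicit curves and arrows and a verification that the braid/chain/lantern bookkeeping closes up, the assertion that the lantern's right-hand side "reduces to an expression built from $Y_{\mu,\alpha}$ and $Y_{\mu,\alpha^{-1}}$" is an expectation, not a proof.
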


\begin{thm}\label{main-thm-3}
For $g\geq1$ and $n\geq0$, $\M(N_{g,n})$ admits a presentation with a generating set $\T\cup\U$.
The defining relations are
\begin{enumerate}
\item	$t_{c;\theta}=1$ if $c$ bounds a disk or a M\"obius band,
\item	\begin{enumerate}
	\item	$t_{c;+_c}^{-1}=t_{c;-_c}$ for any $t_{c;+_c}\in\T$,
	\end{enumerate}
\item	\begin{enumerate}
	\item	$ft_{c;\theta}f^{-1}=t_{f(c);f_\ast(\theta)}$ for any $t_{c;\theta}\in\T$ and $f\in\T\cup\U$,
	\item	$fU_{\mu,\alpha;\theta}f^{-1}=U_{f(\mu),f(\alpha);f_\ast(\theta)}$ for any $U_{\mu,\alpha;\theta}\in\U$ and $f\in\T\cup\{U_{\mu,\alpha;\theta}\}$,
	\end{enumerate}
\item	all the $2$-chain relations,
\item	all the lantern relations and
\item	$U_{\mu,\alpha;\theta}^2=t_{\delta(\mu,\alpha)}$ for any $U_{\mu,\alpha;\theta}\in\U$, where $\delta(\mu,\alpha)$ is a simple closed curve with an arrow determined by $\mu$ and $\alpha$ as shown in Figure~\ref{dehn-slide-trans}~(b).
\end{enumerate}
\end{thm}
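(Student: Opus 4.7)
The plan is to derive the presentation of Theorem~\ref{main-thm-3} from the presentation of Theorem~\ref{main-thm-1} by a sequence of Tietze transformations, exploiting the definitional identity $U_{\mu,\alpha;\theta}=t_{\alpha;\theta}Y_{\mu,\alpha}$ to pass between the generating sets $\T\cup\Y$ and $\T\cup\U$.

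First I would enlarge the presentation of Theorem~\ref{main-thm-1} by adjoining, for every admissible triple $(\mu,\alpha,\theta)$, a new generator $U_{\mu,\alpha;\theta}$ together with the defining relation $U_{\mu,\alpha;\theta}=t_{\alpha;\theta}Y_{\mu,\alpha}$; this is a pure expansion Tietze move and does not change the group. I would then eliminate every generator $Y_{\mu,\alpha}$ by substituting $Y_{\mu,\alpha}=t_{\alpha;\theta}^{-1}U_{\mu,\alpha;\theta}$ into the remaining relations, noting that relation (2a) makes the choice of $\theta$ in each substitution immaterial.

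The bulk of the work is the third step: checking that, after this substitution, each of the six relation families of Theorem~\ref{main-thm-1} becomes a consequence of the six families of Theorem~\ref{main-thm-3}, and conversely. Relations (1), (2a), (4) and (5) coincide in the two presentations. The $f\in\Y$ case of (3a) of Theorem~\ref{main-thm-1}, rewritten via $Y=t^{-1}U$, combines with the $f\in\T$ case of the same relation to produce the $f\in\U$ case of (3a) of Theorem~\ref{main-thm-3}, and conversely. Similarly, the $f\in\T$ case of (3b) of Theorem~\ref{main-thm-3} follows from $U=tY$ together with the $f\in\T$ cases of (3a) and (3b) of Theorem~\ref{main-thm-1}. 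Relation (6) of Theorem~\ref{main-thm-3} is the key algebraic check:
\[
U_{\mu,\alpha;\theta}^{2}=t_{\alpha;\theta}Y_{\mu,\alpha}t_{\alpha;\theta}Y_{\mu,\alpha}=t_{\alpha;\theta}\cdot t_{Y_{\mu,\alpha}(\alpha);(Y_{\mu,\alpha})_{\ast}(\theta)}\cdot Y_{\mu,\alpha}^{2};
\]
since $Y_{\mu,\alpha}$ fixes $\alpha$ as an unoriented curve but reverses the induced orientation of an annular neighborhood, the middle factor equals $t_{\alpha;\theta}^{-1}$ by (2a), and relation (6) of Theorem~\ref{main-thm-1} then gives $U_{\mu,\alpha;\theta}^{2}=t_{\delta(\mu,\alpha)}$.

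The part I expect to demand the most care is the remaining, genuinely geometric, case of relation (3b) of Theorem~\ref{main-thm-3}: the self-conjugation $f=U_{\mu,\alpha;\theta}$. Since $UUU^{-1}=U$ is a tautology, this relation reduces to the identity $U_{\mu,\alpha;\theta}=U_{U(\mu),U(\alpha);U_{\ast}(\theta)}$, i.e.\ the assertion that $U_{\mu,\alpha;\theta}$ fixes its defining pair $(\mu,\alpha)$ up to the orientation data encoded in $\theta$. Verifying this will require careful bookkeeping of how the pushed crosscap and the oriented curve $\alpha$ behave under $U_{\mu,\alpha;\theta}$, modelled on the analogous identity for $Y_{\mu,\alpha}$ used implicitly in the derivation of relation (6) above.
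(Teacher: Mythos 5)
Your overall strategy --- passing between the generating sets via the identity $U_{\mu,\alpha;\theta}=t_{\alpha;\theta}Y_{\mu,\alpha}$ and checking that the two relation sets are interderivable --- is essentially the paper's proof, which packages the same content not as Tietze moves but as a pair of mutually inverse homomorphisms $\eta\colon H\to\M(N_{g,n})$ and $\nu\colon\M(N_{g,n})\to H$ with $\nu(Y_{\mu,\alpha})=t_{\alpha;\theta}^{-1}U_{\mu,\alpha;\theta}$. Your treatments of relation (6) and of the various cases of (3)(a) and (3)(b) match the paper's computations.

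The genuine gap is your claim that ``relation (2a) makes the choice of $\theta$ in each substitution immaterial.'' Before elimination, the two adjoined defining relations $U_{\mu,\alpha;\pm_\alpha}=t_{\alpha;\pm_\alpha}Y_{\mu,\alpha}$ do trivially give $t_{\alpha;+_\alpha}^{-1}U_{\mu,\alpha;+_\alpha}=t_{\alpha;-_\alpha}^{-1}U_{\mu,\alpha;-_\alpha}$; but after you eliminate $Y_{\mu,\alpha}$ in favour of, say, $t_{\alpha;+_\alpha}^{-1}U_{\mu,\alpha;+_\alpha}$, the surviving relation $U_{\mu,\alpha;-_\alpha}=t_{\alpha;-_\alpha}t_{\alpha;+_\alpha}^{-1}U_{\mu,\alpha;+_\alpha}$ is not among the relations of Theorem~\ref{main-thm-3} and must be derived from them, and relation (2)(a) alone does not do this. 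This derivation is in fact the first and central computation of the paper's proof: one first shows $U_{\mu,\alpha;+_\alpha}t_{\alpha;+_\alpha}U_{\mu,\alpha;+_\alpha}^{-1}=t_{\alpha;+_\alpha}^{-1}$ from (3)(a) and (2)(a), hence $t_{\alpha;+_\alpha}^{-1}U_{\mu,\alpha;+_\alpha}=U_{\mu,\alpha;+_\alpha}t_{\alpha;+_\alpha}$, and then applies (3)(b) twice --- once with $f=t_{\alpha;-_\alpha}$ and once, read backwards, with $f=U_{\mu,\alpha;-_\alpha}$ itself --- to conclude $t_{\alpha;-_\alpha}t_{\alpha;+_\alpha}^{-1}U_{\mu,\alpha;+_\alpha}=U_{\mu,\alpha;-_\alpha}$. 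Note that the self-conjugation instance of (3)(b), which you flag at the end only as a geometric identity to be verified, is actually needed here as a \emph{hypothesis} in the derivation; its validity as a relation in $\M(N_{g,n})$ is routine bookkeeping (and, for the converse direction of your Tietze argument, it is automatically a consequence of Theorem~\ref{main-thm-1}'s relations once verified geometrically, since that presentation is already known to present $\M(N_{g,n})$). Once the $\theta$-independence computation is supplied, the rest of your outline goes through.
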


\begin{thm}\label{main-thm-4}
For $g\neq2$ and $n\geq0$, $\M(N_{g,n})$ admits a presentation with a generating set $\T\cup\U$.
The defining relations are
\begin{enumerate}
\item	$t_{c;\theta}=1$ if $c$ bounds a disk or a M\"obius band,
\item	\begin{enumerate}
	\item	$t_{c;+_c}^{-1}=t_{c;-_c}$ for any $t_{c;+_c}\in\T$,
	\item	$U_{\mu,\alpha;\theta}^{-1}=U_{\mu,\alpha^{-1};\theta}$ for any $U_{\mu,\alpha;\theta}\in\U$,
	\end{enumerate}
\item	\begin{enumerate}
	\item	$ft_{c;\theta}f^{-1}=t_{f(c);f_\ast(\theta)}$ for any $t_{c;\theta}\in\T$ and $f\in\T\cup\U$,
	\item	$fU_{\mu,\alpha;\theta}f^{-1}=U_{f(\mu),f(\alpha);f_\ast(\theta)}$ for any $U_{\mu,\alpha;\theta}\in\U$ and $f\in\T\cup\{U_{\mu,\alpha;\theta}\}$,
	\end{enumerate}
\item	all the $2$-chain relations and
\item	all the lantern relations.
\end{enumerate}
\end{thm}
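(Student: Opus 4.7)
The strategy is to derive Theorem~\ref{main-thm-4} from Theorem~\ref{main-thm-3} by a Tietze transformation. Both theorems use the generating set $\T\cup\U$ and share the relations (1), (2)(a), (3), (4) and (5); the only difference is that relation (6) of Theorem~\ref{main-thm-3}, namely $U_{\mu,\alpha;\theta}^{2}=t_{\delta(\mu,\alpha)}$, is replaced in Theorem~\ref{main-thm-4} by relation (2)(b), namely $U_{\mu,\alpha;\theta}^{-1}=U_{\mu,\alpha^{-1};\theta}$. Hence, modulo the common relations, it suffices to prove each of (6) and (2)(b) from the other.

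For the implication ``(6) implies (2)(b)'', I would apply (6) with $\alpha$ replaced by $\alpha^{-1}$ and use the fact that the arrowed curve $\delta(\mu,\alpha)$ depends only on $\mu$ and on the unoriented isotopy class of $\alpha$. This yields $U_{\mu,\alpha;\theta}^{2}=U_{\mu,\alpha^{-1};\theta}^{2}$. Combined with the conjugation relation (3)(b) applied to $f=U_{\mu,\alpha;\theta}$ itself, which identifies $U_{\mu,\alpha;\theta}$ with a $U$-element whose arrowed curve $\alpha$ has been sent to $U_{\mu,\alpha;\theta}(\alpha)$, the equality of squares upgrades to the desired (2)(b).

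The reverse implication ``(2)(b) implies (6)'' is the main content of the theorem and is where the hypothesis $g\neq 2$ enters. I would derive $U_{\mu,\alpha;\theta}^{2}=t_{\delta(\mu,\alpha)}$ by placing a lantern relator (5) inside a four-holed sphere $P$ embedded in $N_{g,n}$ outside a regular neighborhood $K$ of $\mu\cup\alpha$; here $K$ is the Klein bottle with single boundary $\delta(\mu,\alpha)$, and such a $P$ can be found precisely when $g\geq 3$ or $n\geq 2$. Applying the conjugation relations (3) to rewrite three of the lantern twists as $U_{\mu,\alpha;\theta}$-conjugates of the others, and then applying (2)(b) to handle the arrow-reversals of $\alpha$ that these conjugations introduce, should collapse the lantern identity to the desired relation. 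This parallels the classical derivation of the analogous $Y_{\mu,\alpha}^{2}=t_{\delta(\mu,\alpha)}$ in Stukow~\cite{St2} and Omori~\cite{O}, transported to the $U$-setting via $U_{\mu,\alpha;\theta}=t_{\alpha;\theta}Y_{\mu,\alpha}$.

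The main obstacle I expect is the last step: choosing the lantern configuration and carefully tracking the conjugations so that every rewriting uses only relations (1)--(5) together with (2)(b), and never implicitly re-invokes the discarded relation (6). The arrow-reversals of $\alpha$ produced by iterated conjugation by $U_{\mu,\alpha;\theta}$ are the exact place where (2)(b) is used, which is precisely why (2)(b) is a valid replacement for (6). Once the configuration of $P$ and its boundary twists is drawn, I expect the verification to be a direct word calculation analogous to the reduction of Theorem~\ref{main-thm-1} to Theorem~\ref{main-thm-2} at the level of $Y$-generators.
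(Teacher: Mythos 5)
Your overall strategy---a Tietze transformation exchanging relation (6) of Theorem~\ref{main-thm-3} for relation (2)(b), with the hard direction being the derivation of $U_{\mu,\alpha;\theta}^2=t_{\delta(\mu,\alpha)}$ from the remaining relations---is sound and is in the same spirit as the paper, but the paper organizes things differently: it proves Theorems~\ref{main-thm-3} and \ref{main-thm-4} simultaneously by exhibiting an explicit isomorphism $\nu$ from $\M(N_{g,n})$ (presented as in Theorems~\ref{main-thm-1} and \ref{main-thm-2}) to the group $H$ of Theorem~\ref{main-thm-4}, via $\nu(Y_{\mu,\alpha})=t_{\alpha;\theta}^{-1}U_{\mu,\alpha;\theta}$, after first checking that this assignment is independent of $\theta$. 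All the hard combinatorial work is thereby done once, at the level of $Y$-generators, in the proof of Theorem~\ref{main-thm-2}; the passage to $U$-generators is a routine relation-by-relation check. Your plan redoes the computation directly in the $U$-setting, which is legitimate but duplicates effort.

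The genuine gap is that the central computation is not carried out, and it is the entire content of the theorem. In the paper's $Y$-level version (Section~\ref{proof-main-thm-2}) the derivation of the squared relation occupies roughly a page: one first uses relation (3)(b) to write $Y_{\mu,\alpha}=fY_{\mu,\alpha^{-1}}f^{-1}$ for an explicit word $f=t_\beta t_\alpha t_\gamma t_\beta t_\alpha t_\beta$ in Dehn twists carrying $(\mu,\alpha^{-1})$ to $(\mu,\alpha)$, then applies (2)(b) \emph{once} to replace $Y_{\mu,\alpha^{-1}}$ by $Y_{\mu,\alpha}^{-1}$, which converts $Y_{\mu,\alpha}^2$ into a pure Dehn-twist word; that word is then reduced using braid and commutativity relations, a chain relation, relation (1) for a curve bounding a M\"obius band, and finally a lantern relation. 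This is a different mechanism from the one you describe (rewriting three lantern twists as $U$-conjugates and using (2)(b) repeatedly to absorb arrow reversals), and your version would still need the auxiliary curves and the ordered sequence of moves to be produced explicitly; ``should collapse the lantern identity to the desired relation'' is precisely the step that cannot be waved through. Two smaller points: your derivation of (2)(b) from (6) via ``equality of squares'' does not follow as stated ($x^2=y^2$ does not yield $x^{-1}=y$), but this direction is unnecessary anyway, since by Theorem~\ref{main-thm-3} the group with relation (6) is $\M(N_{g,n})$ and (2)(b) is a true geometric relation there. Finally, your claim that the required configuration exists ``precisely when $g\geq3$ or $n\geq2$'' overreaches: the curve $\gamma$ in the paper's construction runs through a third crosscap, the argument is carried out only for $g\geq3$, and the theorem deliberately excludes $g=2$ for all $n$.
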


The relation~(6) of Theorem~\ref{main-thm-1} and the relation~(2)~(b) of Theorem~\ref{main-thm-2} are special forms of a relation on crosscap slides and Dehn twists of $\M(N_{g,n})$ given in \cite{O} and \cite{KO1}.
In this sense one could say that we have given more simple infinite presentations for $\M(N_{g,n})$.

In Section~\ref{proof-main-thm-1}, we prove Theorem~\ref{main-thm-1}.
In Section~\ref{proof-main-thm-2}, we prove Theorem~\ref{main-thm-2}.
In Section~\ref{proof-main-thm-3-4}, we prove Theorems~\ref{main-thm-3} and \ref{main-thm-4}.
Finally, in Appendix~\ref{problem}, we introduce some problems on presentations for $\M(N_{g,n})$.

\section{Proof of Theorem~\ref{main-thm-1}}\label{proof-main-thm-1}

We denote by $\T(N_{g,n})$ the subgroup of $\M(N_{g,n})$ generated by $\T$, and call the \textit{twist subgroup} of $\M(N_{g,n})$.
Omori and the author~\cite{KO2} gave an infinite presentation for $\T(N_{g,n})$ as follows.

\begin{thm}[\cite{KO2}]\label{IFP-T}
For $g\geq1$ and $n\geq0$, $\T(N_{g,n})$ admits a presentation with a generating set $\T$.
The defining relations are
\begin{enumerate}
\item	$t_{c;\theta}=1$ if $c$ bounds a disk or a M\"obius band,
\item	$t_{c;+_c}^{-1}=t_{c;-_c}$ for any $t_{c;+_c}\in\T$,
\item	$ft_{c;\theta}f^{-1}=t_{f(c);f_\ast(\theta)}$ for any $t_{c;\theta}$ and $f\in\T$,
\item	all the $2$-chain relations and
\item	all the lantern relations.
\end{enumerate}
\end{thm}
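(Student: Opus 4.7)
The plan is to prove Theorem~\ref{IFP-T} by comparing the proposed infinite presentation against a known finite presentation of $\T(N_{g,n})$, in the spirit of Gervais's proof for orientable surfaces. Let $G$ denote the abstract group defined by the generating set $\T$ and the relations (1)--(5), and let $\phi\colon G\to\T(N_{g,n})$ be the natural homomorphism sending each generator to the corresponding Dehn twist. Each of the relations (1)--(5) is a classical geometric identity in mapping class groups, so $\phi$ is well defined, and $\phi$ is surjective since $\T$ generates $\T(N_{g,n})$. The content of the theorem is therefore the injectivity of $\phi$.

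To prove injectivity, I would fix a finite generating set $X\subset\T$ and a finite presentation $\T(N_{g,n})=\langle X\mid R\rangle$. Such a presentation can be obtained by applying the Reidemeister--Schreier method to the Paris--Szepietowski finite presentation of $\M(N_{g,n})$ (or, for $n\geq 2$, to its extension in \cite{KO1}), using the index-two inclusion $\T(N_{g,n})\subset\M(N_{g,n})$. It then suffices to verify two assertions in $G$: (a) every generator $t_{c;\theta}\in\T$ can be rewritten as a word in $X\cup X^{-1}$; and (b) every defining relation in $R$ is a consequence of (1)--(5). Granting (a) and (b) produces an inverse homomorphism to $\phi$, finishing the proof.

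For (a), the argument is change-of-coordinates: any two two-sided simple closed curves of the same topological type in $N_{g,n}$ are related by a composition of elements of $X$, and iterated application of the conjugation relation (3) reduces the corresponding Dehn twist to a conjugate of an element of $X$ inside $G$. For (b), the defining relations in $R$ fall, after standard rewriting, into commuting, braid, chain, and lantern relations. Commuting relations $t_ct_d=t_dt_c$ for disjoint curves follow from (3) combined with $t_c(d)=d$; braid relations $t_ct_dt_c=t_dt_ct_d$ for curves meeting once follow from (3) combined with $(t_ct_d)(c)=d$; 2-chain and lantern relations are exactly (4) and (5); and longer chain relations reduce to 2-chain relations by chain-expansion together with conjugation.

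The main obstacle I anticipate is step (b) for the Reidemeister--Schreier translates of relations involving crosscap slides in $\M(N_{g,n})$: these can produce complicated identities in $\T(N_{g,n})$, each of which must be verified as a consequence of (1)--(5), particularly in low genera. For $g\leq 3$ a separate analysis comparing to the known finite presentations of $\M(N_{2,0})$, $\M(N_{2,1})$, and $\M(N_{3,0})$ is likely required, while for $g\geq 4$ the general machinery above should apply uniformly.
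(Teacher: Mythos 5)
Theorem~\ref{IFP-T} is quoted in this paper from \cite{KO2}; the present paper does not reprove it, so there is no internal proof to compare your argument against. Your outline does, however, follow the same overall strategy as \cite{KO2} (and as Omori's earlier work \cite{O} for the full group $\M(N_{g,n})$): set up the natural surjection $\phi\colon G\to\T(N_{g,n})$, and invert it by checking that a known finite presentation of the twist subgroup --- obtained essentially by Reidemeister--Schreier from \cite{PS}, as in Stukow's work on the twist subgroup --- is absorbed by the relations (1)--(5).

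That said, as a proof the proposal is only a skeleton, and the two places where you defer the work are exactly where the content lies. In step (a) you assert that any $t_{c;\theta}$ is conjugate, \emph{inside $G$ and using only relation (3)}, to a twist about a curve in the finite set $X$. This requires not just the change-of-coordinates principle for $\M(N_{g,n})$ but transitivity of the subgroup generated by $X$ (an index-two subgroup when $g\geq2$) on two-sided curves of each topological type, together with control of the orientation datum $\theta$ via relation (2); for curves with small complement this is delicate and constitutes a substantial part of the actual argument in \cite{KO2}. In step (b), the Reidemeister--Schreier rewriting of the relators of \cite{PS} that involve crosscap transpositions produces long words in Dehn twists, and deriving each of these from (1)--(5) is the technical heart of the theorem --- each such verification is comparable to the computation carried out in Section~\ref{proof-main-thm-2} of this paper for a single relation. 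You correctly identify this as the main obstacle, but naming it is not the same as overcoming it, so the proposal should be regarded as a correct plan (matching the strategy of the cited source) whose execution is missing, rather than a proof.
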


As a corollary of Theorem~\ref{IFP-T}, we obtain the following.

\begin{cor}\label{IFP-M-1}
Fix one crosscap slide $Y_{\mu_0,\alpha_0}\in\Y$.
For $g\geq2$ and $n\geq0$, $\M(N_{g,n})$ admits a presentation with a generating set $\T\cup\{Y_{\mu_0,\alpha_0}\}$.
The defining relations are
\begin{enumerate}
\item	$t_{c;\theta}=1$ if $c$ bounds a disk or a M\"obius band,
\item	$t_{c;+_c}^{-1}=t_{c;-_c}$ for any $t_{c;+_c}\in\T$,
\item	$ft_{c;\theta}f^{-1}=t_{f(c);f_\ast(\theta)}$ for any $t_{c;\theta}\in\T$ and $f\in\T\cup\{Y_{\mu_0,\alpha_0}\}$,
\item	all the $2$-chain relations,
\item	all the lantern relations and
\item	$Y_{\mu_0,\alpha_0}^2=t_{\delta(\mu_0,\alpha_0)}$.
\end{enumerate}
\end{cor}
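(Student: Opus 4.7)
The plan is to deduce the corollary from Theorem~\ref{IFP-T} by applying the standard presentation lemma for an index-two extension. The starting observation is that, for $g\geq 2$, the twist subgroup $\T(N_{g,n})$ has index two in $\M(N_{g,n})$, and any crosscap slide $Y_{\mu_0,\alpha_0}$ lies in the nontrivial coset, so $\M(N_{g,n}) = \T(N_{g,n}) \sqcup Y_{\mu_0,\alpha_0}\,\T(N_{g,n})$. This immediately shows that $\T \cup \{Y_{\mu_0,\alpha_0}\}$ generates $\M(N_{g,n})$.

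The main tool will be the following elementary fact: if $H = \langle X \mid R \rangle$ is a subgroup of index two in a group $G$ with coset representative $y \in G \setminus H$, then
\[
G = \bigl\langle X \cup \{y\} \bigm| R,\ y^2 = w_0,\ y x y^{-1} = w_x \ \text{for all } x \in X \bigr\rangle,
\]
where $w_0$ and $w_x$ are any words in $X$ representing $y^2$ and $y x y^{-1}$ respectively. I would apply this with $H = \T(N_{g,n})$, $X = \T$, $y = Y_{\mu_0,\alpha_0}$, and the presentation of $H$ supplied by Theorem~\ref{IFP-T}. The relations inherited from $R$ then give precisely relations (1), (2), (4) and (5) of the corollary together with the $f \in \T$ instance of relation (3). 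The squaring relation $y^2 = w_0$ matches relation (6) via the standard geometric identity $Y_{\mu_0,\alpha_0}^2 = t_{\delta(\mu_0,\alpha_0)}$. The conjugation relations $y x y^{-1} = w_x$, specialized to $x = t_{c;\theta}$, read $Y_{\mu_0,\alpha_0}\, t_{c;\theta}\, Y_{\mu_0,\alpha_0}^{-1} = t_{Y_{\mu_0,\alpha_0}(c);\,(Y_{\mu_0,\alpha_0})_\ast(\theta)}$, which is exactly the remaining $f = Y_{\mu_0,\alpha_0}$ case of relation (3). This accounts for every relation listed in the corollary.

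The only ingredients beyond formal bookkeeping are (a) the index-two statement for $\T(N_{g,n})$ together with the fact that a crosscap slide represents the nontrivial coset, (b) the identity $Y_{\mu_0,\alpha_0}^2 = t_{\delta(\mu_0,\alpha_0)}$ in $\M(N_{g,n})$, and (c) the general fact that the conjugate of a Dehn twist by any mapping class is again a Dehn twist about the image curve. All three are classical facts about non-orientable mapping class groups; (b) and (c) are local and essentially by construction, and I expect (a) to be the only potentially delicate step, though it too is standard and can be established by producing an explicit surjection $\M(N_{g,n}) \to \Z/2\Z$ that vanishes on every Dehn twist but not on $Y_{\mu_0,\alpha_0}$.
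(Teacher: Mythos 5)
Your proposal is correct and follows essentially the same route as the paper: the paper likewise invokes the short exact sequence $1\to\T(N_{g,n})\to\M(N_{g,n})\to\langle Y_{\mu_0,\alpha_0}\mid Y_{\mu_0,\alpha_0}^2\rangle\to1$ (citing Lickorish for the index-two statement) and then applies the standard combinatorial-group-theory recipe for presenting an extension from presentations of the normal subgroup and the quotient. Your write-up simply makes explicit the bookkeeping of which relations arise from which part of that recipe, which the paper leaves to a reference.
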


\begin{proof}
For $g\geq2$, we have the short exact sequence
$$1\to\T(N_{g,n})\to\M(N_{g,n})\to\langle{Y_{\mu_0,\alpha_0}\mid{}Y_{\mu_0,\alpha_0}^2}\rangle\to1$$
(see \cite{Li1,Li2}).
As basics on combinatorial group theory, a presentation for $\M(N_{g,n})$ is obtained by adding the generator $Y_{\mu_0,\alpha_0}$ and the relations $Y_{\mu_0,\alpha_0}t_{c;\theta}Y_{\mu_0,\alpha_0}^{-1}=t_{Y_{\mu_0,\alpha_0}(c);(Y_{\mu_0,\alpha_0})_\ast(\theta)}$ and $Y_{\mu_0,\alpha_0}^2=t_{\delta(\mu_0,\alpha_0)}$ to the presentation for $\T(N_{g,n})$ given in Theorem~\ref{IFP-T} (for details, for instance see~\cite{J}).
Thus we obtained the claim.
\end{proof}

We now prove Theorem~\ref{main-thm-1}.

Let $G$ be the group presented in Theorem~\ref{main-thm-1}.
When $g=1$, since a crosscap slide can not be defined we see $\M(N_{g,n})=\T(N_{g,n})$.
In addition, since $\Y$ is the empty set, $G$ is isomorphic to $\T(N_{g,n})$ by Theorem~\ref{IFP-T}, and so $\M(N_{g,n})$.
Therefore we suppose $g\geq2$.

Let $\varphi:G\to\M(N_{g,n})$ be the natural homomorphism and $\psi:\M(N_{g,n})\to{G}$ the homomorphism defined as $\psi(f)=f$ for any $f\in\T\cup\{Y_{\mu_0,\alpha_0}\}$.
Since any relation of $\M(N_{g,n})$ in Corollary~\ref{IFP-M-1} is satisfied in $G$, we see that $\psi$ is well-defined.
Since $\varphi\circ\psi$ is the identity map clearly, it suffices to show that $\psi\circ\varphi$ is the identity map.

For any $t_{c;\theta}\in\T$, it is clear that $\psi(\varphi(t_{c;\theta}))=t_{c;\theta}$.
For any $Y_{\mu,\alpha}\in\Y$, there is $f\in\M(N_{g,n})$ such that $fY_{\mu_0,\alpha_0}f^{-1}=Y_{\mu,\alpha}$, that is, $f(\mu_0)=\mu$ and $f(\alpha_0)=\alpha$, in $\M(N_{g,n})$.
If $f$ is in $\T(N_{g,n})$, since $f$ can be represented as a word $f_1\cdots{}f_k$ on $\T$, repeating the relations~(2)~(a) and (3)~(b) of $G$, we calculate
\begin{eqnarray*}
\psi(\varphi(Y_{\mu,\alpha}))
&=&
\psi(Y_{\mu,\alpha})\\
&=&
\psi(fY_{\mu_0,\alpha_0}f^{-1})\\
&=&
\psi(f_1\cdots{}f_kY_{\mu_0,\alpha_0}f_k^{-1}\cdots{}f_1^{-1})\\
&=&
f_1\cdots{}f_kY_{\mu_0,\alpha_0}f_k^{-1}\cdots{}f_1^{-1}\\
&=&
Y_{f_1\cdots{}f_k(\mu_0),f_1\cdots{}f_k(\alpha_0)}\\
&=&
Y_{f(\mu_0),f(\alpha_0)}\\
&=&
Y_{\mu,\alpha}.
\end{eqnarray*}
If $f$ is not in $\T(N_{g,n})$, there exists $h\in\T(N_{g,n})$ such that $f=hY_{\mu_0,\alpha_0}$, by the sequence in the proof of Corollary~\ref{IFP-M-1}.
Since $h$ can be represented as a word $h_1\cdots{}h_k$ on $\T$, repeating the relations~(2)~(a) and (3)~(b) of $G$, we calculate
\begin{eqnarray*}
\psi(\varphi(Y_{\mu,\alpha}))
&=&
\psi(Y_{\mu,\alpha})\\
&=&
\psi(fY_{\mu_0,\alpha_0}f^{-1})\\
&=&
\psi(hY_{\mu_0,\alpha_0}Y_{\mu_0,\alpha_0}Y_{\mu_0,\alpha_0}^{-1}h^{-1})\\
&=&
\psi(h_1\cdots{}h_kY_{\mu_0,\alpha_0}h_k^{-1}\cdots{}h_1^{-1})\\
&=&
h_1\cdots{}h_kY_{\mu_0,\alpha_0}h_k^{-1}\cdots{}h_1^{-1}\\
&=&
Y_{h_1\cdots{}h_k(\mu_0),h_1\cdots{}h_k(\alpha_0)}\\
&=&
Y_{h(\mu_0),h(\alpha_0)}\\
&=&
Y_{h(Y_{\mu_0,\alpha_0}(\mu_0)),h(Y_{\mu_0,\alpha_0}(\alpha_0))}\\
&=&
Y_{f(\mu_0),f(\alpha_0)}\\
&=&
Y_{\mu,\alpha}.
\end{eqnarray*}
Therefore we conclude that $\psi\circ\varphi$ is the identity map.

Thus we have that $G$ is isomorphic to $\M(N_{g,n})$, and hence the proof of Theorem~\ref{main-thm-1} is completed.

\section{Proof of Theorem~\ref{main-thm-2}}\label{proof-main-thm-2}

Any relation which appeared in Theorem~\ref{main-thm-2} is clearly satisfied in $\M(N_{g,n})$.
In addition, the relations~(1)-(5) in Theorem~\ref{main-thm-1} are included in the relations in Theorem~\ref{main-thm-2}.
Hence it suffices to show that the relation~(6) in Theorem~\ref{main-thm-1} is obtained from relations in Theorem~\ref{main-thm-2} when $g\geq3$.

\begin{rem}
\begin{itemize}
\item	In the relation~(3)~(a) in the main theorems, if $f=t_{c^\prime;\theta^\prime}$, $|c\cap{}c^\prime|=0$ or $1$, and the orientations $\theta$ and $\theta^\prime$ are compatible, then the relation can be rewritten as a \textit{commutativity relation} $t_{c;\theta}t_{c^\prime;\theta^\prime}=t_{c^\prime;\theta^\prime}t_{c;\theta}$ or a \textit{braid relation} $t_{c;\theta}t_{c^\prime;\theta^\prime}t_{c;\theta}=t_{c^\prime;\theta^\prime}t_{c;\theta}t_{c^\prime;\theta^\prime}$ respectively.
	We assign the label $(3)^\prime$ to all the commutativity relations and all the braid relations.
\item	It is known that any chain relation is obtained from the relations~(1), (3)~(a), (4) and (5) in the main theorems (see \cite{Lu}).
	We assign the label $(4)^\prime$ to all the chain relations.
\end{itemize}
\end{rem}

For any $Y_{\mu,\alpha}\in\Y$, let $\beta$, $\gamma$, $\delta$, $A$, $B$, $C$, $D$ and $E$ be simple closed curves with arrows as shown in Figure~\ref{B2}.
By Figure~\ref{B3}~(a), we have $t_\beta{}t_\alpha{}t_\gamma{}t_\beta{}t_\alpha{}t_\beta(\mu)=\mu$ and $t_\beta{}t_\alpha{}t_\gamma{}t_\beta{}t_\alpha{}t_\beta(\alpha^{-1})=\alpha$.
By Figure~\ref{B3}~(b) and the relation~(2)~(a), we have $t_{Y_{\mu,\alpha^{-1}}(\beta)}^{-1}=t_\delta$ and $t_{Y_{\mu,\alpha^{-1}}(\gamma)}^{-1}=t_\beta$.
Note that $t_{Y_{\mu,\alpha^{-1}}(\alpha)}^{-1}=t_\alpha$.
By Figure~\ref{B3}~(c) and the relation~(2)~(a), we have $t_{t_\beta{}t_\alpha(\gamma)}=t_A$ and $t_{t_\beta{}t_\alpha(\delta)}^{-1}=t_B$.
By Figure~\ref{B3}~(d) and the relation~$(4)^\prime$, we have $(t_\delta{}t_\alpha{}t_\beta)^4=t_Ct_D$.
By Figure~\ref{B3}~(e) and the relations~(2)~(a) and (5), we have $t_B^{-1}t_A^{-1}t_{\delta(\mu,\alpha)}=t_Ct_Et_\alpha{}t_\alpha^{-1}$.

\begin{figure}[htbp]
\includegraphics{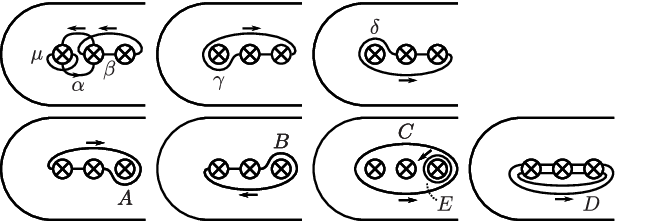}
\caption{Simple closed curves $\mu$, $\alpha$, $\beta$, $\gamma$, $\delta$ $A$, $B$, $C$, $D$ and $E$ of $N_{g,n}$.}\label{B2}
\end{figure}

\begin{figure}[htbp]
\subfigure[$t_\beta{}t_\alpha{}t_\gamma{}t_\beta{}t_\alpha{}t_\beta(\mu)=\mu$ and $t_\beta{}t_\alpha{}t_\gamma{}t_\beta{}t_\alpha{}t_\beta(\alpha^{-1})=\alpha$.]{\includegraphics{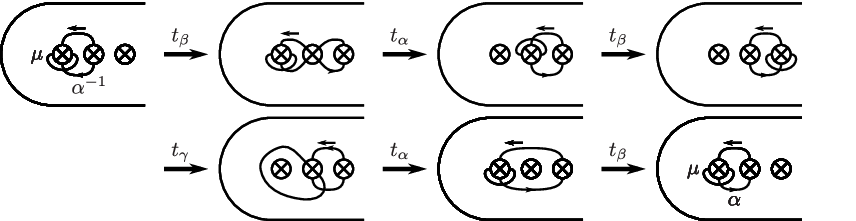}}

\subfigure[$t_{Y_{\mu,\alpha^{-1}}(\beta)}^{-1}=t_\delta$ and $t_{Y_{\mu,\alpha^{-1}}(\gamma)}^{-1}=t_\beta$.]{\includegraphics{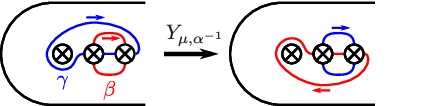}}

\subfigure[$t_{t_\beta{}t_\alpha(\gamma)}=t_A$ and $t_{t_\beta{}t_\alpha(\delta)}^{-1}=t_B$.]{\includegraphics{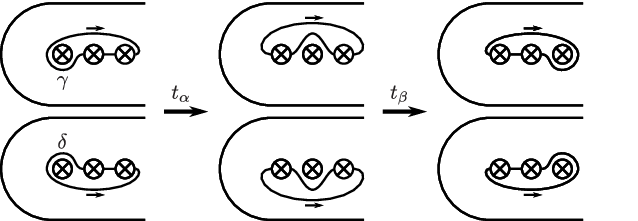}}

\subfigure[$(t_\delta{}t_\alpha{}t_\beta)^4=t_Ct_D$.]{\includegraphics{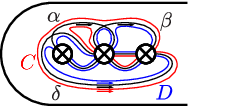}}

\subfigure[$t_B^{-1}t_A^{-1}t_{\delta(\mu,\alpha)}=t_Ct_Et_\alpha{}t_\alpha^{-1}$.]{\includegraphics{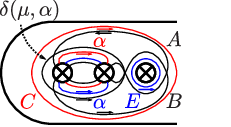}}
\caption{}\label{B3}
\end{figure}

We calculate
\begin{eqnarray*}
Y_{\mu,\alpha}^2
&=&
Y_{t_\beta{}t_\alpha{}t_\gamma{}t_\beta{}t_\alpha{}t_\beta(\mu),t_\beta{}t_\alpha{}t_\gamma{}t_\beta{}t_\alpha{}t_\beta(\alpha^{-1})}Y_{\mu,\alpha}\\
&\overset{\mathrm{(3)(b),(2)(b)}}{=}&
t_\beta{}t_\alpha{}t_\gamma{}t_\beta{}t_\alpha{}t_\beta{}Y_{\mu,\alpha^{-1}}t_\beta^{-1}t_\alpha^{-1}t_\beta^{-1}t_\gamma^{-1}t_\alpha^{-1}t_\beta^{-1}Y_{\mu,\alpha^{-1}}^{-1}\\
&\overset{\mathrm{(3)(a)}}{=}&
t_\beta{}t_\alpha{}t_\gamma{}t_\beta{}t_\alpha{}t_\beta{}t_{Y_{\mu,\alpha^{-1}}(\beta)}^{-1}t_{Y_{\mu,\alpha^{-1}}(\alpha)}^{-1}t_{Y_{\mu,\alpha^{-1}}(\beta)}^{-1}t_{Y_{\mu,\alpha^{-1}}(\gamma)}^{-1}t_{Y_{\mu,\alpha^{-1}}(\alpha)}^{-1}t_{Y_{\mu,\alpha^{-1}}(\beta)}^{-1}\\
&=&
t_\beta{}t_\alpha{}t_\gamma\underline{t_\beta{}t_\alpha{}t_\beta}t_\delta{}t_\alpha\underline{t_\delta{}t_\beta}t_\alpha{}t_\delta\\
&=&
t_\beta{}t_\alpha{}t_\gamma{}t_\alpha{}t_\beta{}t_\alpha{}t_\delta{}t_\alpha{}t_\beta\underline{t_\delta{}t_\alpha{}t_\delta}\\
&=&
t_\beta{}t_\alpha{}t_\gamma{}t_\alpha{}t_\beta{}t_\alpha{}t_\delta\underline{t_\alpha{}t_\beta{}t_\alpha}t_\delta{}t_\alpha\\
&=&
t_\beta{}t_\alpha{}t_\gamma{}t_\alpha{}t_\beta{}t_\alpha\underline{t_\delta{}t_\beta}t_\alpha\underline{t_\beta{}t_\delta}t_\alpha\\
&=&
t_\beta{}t_\alpha{}t_\gamma{}t_\alpha{}t_\beta{}t_\alpha{}t_\beta\underline{t_\delta{}t_\alpha{}t_\delta}t_\beta{}t_\alpha\\
&=&
t_\beta{}t_\alpha{}t_\gamma{}t_\alpha{}t_\beta{}t_\alpha{}t_\beta{}t_\alpha{}t_\delta\underline{t_\alpha{}t_\beta{}t_\alpha}\\
&=&
t_\beta{}t_\alpha{}t_\gamma{}t_\alpha{}t_\beta{}t_\alpha{}t_\beta{}t_\alpha\underline{t_\delta{}t_\beta}t_\alpha{}t_\beta\\
&=&
t_\beta{}t_\alpha{}t_\gamma{}t_\alpha{}t_\beta{}t_\alpha{}t_\beta{}t_\alpha{}t_\beta{}t_\delta{}t_\alpha{}t_\beta\\
&=&
t_\beta{}t_\alpha{}t_\gamma{}t_\alpha\underline{t_\beta\cdot{}t_\delta^{-1}}t_\delta\cdot{}t_\alpha{}t_\beta{}t_\alpha{}t_\beta{}t_\delta{}t_\alpha{}t_\beta\\
&=&
t_\beta{}t_\alpha{}t_\gamma{}t_\alpha{}t_\delta^{-1}t_\beta{}t_\delta{}t_\alpha{}t_\beta{}t_\alpha{}t_\beta{}t_\delta{}t_\alpha{}t_\beta\\
&=&
t_\beta{}t_\alpha{}t_\gamma\underline{t_\alpha{}t_\delta^{-1}\cdot{}t_\alpha^{-1}}t_\beta^{-1}\underline{t_\beta{}t_\alpha\cdot{}t_\beta}t_\delta{}t_\alpha{}t_\beta{}t_\alpha{}t_\beta{}t_\delta{}t_\alpha{}t_\beta\\
&=&
t_\beta{}t_\alpha{}t_\gamma{}t_\delta^{-1}t_\alpha^{-1}\underline{t_\delta{}t_\beta^{-1}}t_\alpha{}t_\beta\underline{t_\alpha{}t_\delta{}t_\alpha}t_\beta{}t_\alpha{}t_\beta{}t_\delta{}t_\alpha{}t_\beta\\
&=&
t_\beta{}t_\alpha{}t_\gamma{}t_\delta^{-1}t_\alpha^{-1}t_\beta^{-1}t_\delta{}t_\alpha{}t_\beta{}t_\delta{}t_\alpha\underline{t_\delta{}t_\beta}t_\alpha{}t_\beta{}t_\delta{}t_\alpha{}t_\beta\\
&=&
(t_\beta{}t_\alpha{}t_\gamma{}t_\alpha^{-1}t_\beta^{-1})(t_\beta{}t_\alpha{}t_\delta^{-1}t_\alpha^{-1}t_\beta^{-1})(t_\delta{}t_\alpha{}t_\beta)^4\\
&\overset{\mathrm{(3)(a)}}{=}&
t_{t_\beta{}t_\alpha(\gamma)}t_{t_\beta{}t_\alpha(\delta)}^{-1}(t_\delta{}t_\alpha{}t_\beta)^4\\
&=&
t_At_Bt_Ct_D\\
&\overset{(1)}{=}&
t_At_Bt_C\\
&\overset{(1)}{=}&
t_At_Bt_Ct_E\\
&=&
t_At_Bt_Ct_Et_\alpha{}t_\alpha^{-1}\\
&=&
t_{\delta(\mu,\alpha)},
\end{eqnarray*}
where in the underlines, we apply the relation~$(3)^\prime$.

Therefore the group presented in Theorem~\ref{main-thm-2} is isomorphic to the group presented in Theorem~\ref{main-thm-1}, and so $\M(N_{g,n})$.
Thus we finish the proof of Theorem~\ref{main-thm-2}.

\section{Proof of Theorems~\ref{main-thm-3} and \ref{main-thm-4}}\label{proof-main-thm-3-4}

We first note that Theorems~\ref{main-thm-3} and \ref{main-thm-4} can be shown by the arguments similar to Sections~\ref{proof-main-thm-1} and \ref{proof-main-thm-2} respectively.
However we prove these by giving isomorphisms from the groups presented in Theorems~\ref{main-thm-3} and \ref{main-thm-4} to the groups presented in Theorems~\ref{main-thm-1} and \ref{main-thm-2} respectively.

Let $H$ be the group presented in either Theorems~\ref{main-thm-3} or \ref{main-thm-4}.
When $g=1$, since $\U$ is the empty set, the presentation of $H$ is same to the presentation of the group presented in Theorems~\ref{main-thm-1} and \ref{main-thm-2}.
Hence $H$ is isomorphic to $\M(N_{1,n})$.
Therefore we suppose $g\geq2$.

Let $\eta:H\to\M(N_{g,n})$ be the natural homomorphism and $\nu:\M(N_{g,n})\to{H}$ the homomorphism defined as $\nu(t_{c;\theta})=t_{c;\theta}$ and $\nu(Y_{\mu,\alpha})=t_{\alpha;\theta}^{-1}U_{\mu,\alpha;\theta}$ for any $t_{c;\theta}\in\T$ and $Y_{\mu,\alpha}\in\Y$.
If $\nu$ is well-defined, $\nu$ is the inverse map of $\eta$ clearly.
So it suffices to show well-definedness of $\nu$, that is, we show that the correspondence $\nu(Y_{\mu,\alpha})=t_{\alpha;\theta}^{-1}U_{\mu,\alpha;\theta}$ does not depend on the choice of $\theta\in\{+_\alpha.-_\alpha\}$, and that any relation of $\M(N_{g,n})$ is satisfied in $H$.

First we show that $t_{\alpha;+_\alpha}^{-1}U_{\mu,\alpha;+_\alpha}=t_{\alpha;-_\alpha}^{-1}U_{\mu,\alpha;-_\alpha}$ in $H$.
By the relations~(2)~(a) and (3)~(a) of $H$, we calculate
\begin{eqnarray*}
U_{\mu,\alpha;\theta}t_{\alpha;\theta}U_{\mu,\alpha;\theta}^{-1}
&=&
t_{U_{\mu,\alpha;\theta}(\alpha);(U_{\mu,\alpha;\theta})_\ast(\theta)}\\
&=&
t_{\alpha;\theta^\prime}\\
&=&
t_{\alpha;\theta}^{-1},
\end{eqnarray*}
where $\theta^\prime$ is the inverse orientation of $\theta$, and so $t_{\alpha;\theta}^{-1}U_{\mu,\alpha;\theta}=U_{\mu,\alpha;\theta}t_{\alpha;\theta}$.
Since $t_{\alpha;-_\alpha}(\mu)=t_{\alpha;-_\alpha}(Y_{\mu,\alpha}(\mu))=U_{\mu,\alpha;-_\alpha}(\mu)$, $t_{\alpha;-_\alpha}(\alpha)=t_{\alpha;-_\alpha}(Y_{\mu,\alpha}(\alpha))=U_{\mu,\alpha;-_\alpha}(\alpha)$ and $(t_{\alpha;-_\alpha})_\ast(+_\alpha)=(t_{\alpha;-_\alpha})_\ast((Y_{\mu,\alpha})_\ast(-_\alpha))=(U_{\mu,\alpha;-_\alpha})_\ast(-_\alpha)$, by the relations~(2)~(a) and (3)~(b) of $H$, we calculate
\begin{eqnarray*}
t_{\alpha;-_\alpha}t_{\alpha;+_\alpha}^{-1}U_{\mu,\alpha;+_\alpha}
&=&
t_{\alpha;-_\alpha}U_{\mu,\alpha;+_\alpha}t_{\alpha;+_\alpha}\\
&=&
t_{\alpha;-_\alpha}U_{\mu,\alpha;+_\alpha}t_{\alpha;-_\alpha}^{-1}\\
&=&
U_{t_{\alpha;-_\alpha}(\mu),t_{\alpha;-_\alpha}(\alpha);(t_{\alpha;-_\alpha})_\ast(+_\alpha)}\\
&=&
U_{U_{\mu,\alpha;-_\alpha}(\mu),U_{\mu,\alpha;-_\alpha}(\alpha);(U_{\mu,\alpha;-_\alpha})_\ast(-_\alpha)}\\
&=&
U_{\mu,\alpha;-_\alpha}U_{\mu,\alpha;-_\alpha}U_{\mu,\alpha;-_\alpha}^{-1}\\
&=&
U_{\mu,\alpha;-_\alpha},
\end{eqnarray*}
and so $t_{\alpha;+_\alpha}^{-1}U_{\mu,\alpha;+_\alpha}=t_{\alpha;-_\alpha}^{-1}U_{\mu,\alpha;-_\alpha}$.
Therefore we conclude that the correspondence $\nu(Y_{\mu,\alpha})=t_{\alpha;\theta}^{-1}U_{\mu,\alpha;\theta}$ does not depend on the choice of $\theta\in\{+_\alpha.-_\alpha\}$.

Next we show that any relation appearing in Theorems~\ref{main-thm-1} and \ref{main-thm-2} is satisfied in $H$.
The relations~(1), (2)~(a), (3)~(a) with $f\in\T$, (4) and (5) in Theorems~\ref{main-thm-1} and \ref{main-thm-2} are satisfied in $H$ clearly.
By the relation~(2)~(b) of $H$ and the equality $t_{\alpha;\theta}^{-1}U_{\mu,\alpha;\theta}=U_{\mu,\alpha;\theta}t_{\alpha;\theta}$ shown above, we calculate
\begin{eqnarray*}
\nu(Y_{\mu,\alpha}^{-1})
&=&
(t_{\alpha;\theta}^{-1}U_{\mu,\alpha;\theta})^{-1}\\
&=&
(U_{\mu,\alpha;\theta}t_{\alpha;\theta})^{-1}\\
&=&
t_{\alpha;\theta}^{-1}U_{\mu,\alpha;\theta}^{-1}\\
&=&
t_{\alpha^{-1};\theta}^{-1}U_{\mu,\alpha^{-1};\theta}\\
&=&
\nu(Y_{\mu,\alpha^{-1}}).
\end{eqnarray*}
Hence the relation~(2)~(b) in Theorem~\ref{main-thm-2} is satisfied in $H$.
By the relations~(2)~(a) and (3)~(a) of $H$, we calculate
\begin{eqnarray*}
\nu(Y_{\mu,\alpha}t_{c;\theta}Y_{\mu,\alpha}^{-1})
&=&
(t_{\alpha;\theta}^{-1}U_{\mu,\alpha;\theta})t_{c;\theta}(t_{\alpha;\theta}^{-1}U_{\mu,\alpha;\theta})^{-1}\\
&=&
(t_{\alpha;\theta^\prime}U_{\mu,\alpha;\theta})t_{c;\theta}(t_{\alpha;\theta^\prime}U_{\mu,\alpha;\theta})^{-1}\\
&=&
t_{t_{\alpha;\theta^\prime}U_{\mu,\alpha;\theta}(c);(t_{\alpha;\theta^\prime}U_{\mu,\alpha;\theta})_\ast(\theta)}\\
&=&
t_{t_{\alpha;\theta}^{-1}U_{\mu,\alpha;\theta}(c);(t_{\alpha;\theta}^{-1}U_{\mu,\alpha;\theta})_\ast(\theta)}\\
&=&
t_{Y_{\mu,\alpha}(c);(Y_{\mu,\alpha})_\ast(\theta)}\\
&=&
\nu(t_{Y_{\mu,\alpha}(c);(Y_{\mu,\alpha})_\ast(\theta)}),
\end{eqnarray*}
where $\theta^\prime$ is the inverse orientation of $\theta$.
Hence the relations~(3)~(a) with $f\in\Y$ in Theorems~\ref{main-thm-1} and \ref{main-thm-2} are satisfied in $H$.
By the relations~(3)~(a) and (3)~(b) of $H$, we calculate
\begin{eqnarray*}
\nu(fY_{\mu,\alpha}f^{-1})
&=&
f(t_{\alpha;\theta}^{-1}U_{\mu,\alpha;\theta})f^{-1}\\
&=&
(ft_{\alpha;\theta}f^{-1})^{-1}(fU_{\mu,\alpha;\theta}f^{-1})\\
&=&
t_{f(\alpha);f_\ast(\theta)}^{-1}U_{f(\mu),f(\alpha);f_\ast(\theta)}\\
&=&
\nu(Y_{f(\mu),f(\alpha)}).
\end{eqnarray*}
Hence the relations~(3)~(b) in Theorems~\ref{main-thm-1} and \ref{main-thm-2} are satisfied in $H$.
By the relation~(6) of $H$ and the equality $t_{\alpha;\theta}^{-1}U_{\mu,\alpha;\theta}=U_{\mu,\alpha;\theta}t_{\alpha;\theta}$, we calculate
\begin{eqnarray*}
\nu(Y_{\mu,\alpha}^2)
&=&
(t_{\alpha;\theta}^{-1}U_{\mu,\alpha;\theta})^2\\
&=&
(U_{\mu,\alpha;\theta}t_{\alpha;\theta})(t_{\alpha;\theta}^{-1}U_{\mu,\alpha;\theta})\\
&=&
U_{\mu,\alpha;\theta}^2\\
&=&
t_{\delta(\mu,\alpha)}\\
&=&
\nu(t_{\delta(\mu,\alpha)}).
\end{eqnarray*}
Hence the relation~(6) in Theorem~\ref{main-thm-1} is satisfied in $H$.
Therefore we conclude that any relation of $\M(N_{g,n})$ is satisfied in $H$.

So we obtain well-definedness of $\nu$, and hence it follows that $H$ is isomorphic to $\M(N_{g,n})$.
Thus we complete the proof of Theorems~\ref{main-thm-3} and \ref{main-thm-4}.

\appendix
\section{Problems on presentations for $\M(N_{g,n})$}\label{problem}

In this appendix, we introduce three problems on presentations for $\M(N_{g,n})$ as follows.

In the main theorems, we can reduced relations.
For example, in the relation~(3)~(a) in Theorems~\ref{main-thm-1} and \ref{main-thm-2} (resp. Theorems~\ref{main-thm-3} and \ref{main-thm-4}), $\T\cup\Y$ (resp. $\T\cup\U$) can be reduced to a finite number of Dehn twists and one crosscap slide (resp. a finite number of Dehn twists and one crosscap transposition).
Similarly, in the relation~(3)~(b) in Theorems~\ref{main-thm-1} and \ref{main-thm-2} (resp. Theorems~\ref{main-thm-3} and \ref{main-thm-4}), $\T$ (resp. $\T\cup\{U_{\mu,\alpha;\theta}\}$) can be reduced to a finite number of Dehn twists (resp. a finite number of Dehn twists by adding the relation $U_{\mu,\alpha;+_\alpha}=U_{t_{\alpha;+_\alpha}(\mu),\alpha;-_\alpha}$).
In addition, we can reduced the relation~(6) in Theorem~\ref{main-thm-1} (resp. Theorem~\ref{main-thm-3}) to one relation $Y_{\mu_0,\alpha_0}^2=t_{\delta(\mu_0,\alpha_0)}$ (resp. $U_{\mu_0,\alpha_0;\theta}^2=t_{\delta(\mu_0,\alpha_0)}$) for some pair $(\mu_0,\alpha_0)$.
On the other hand, the relation of the mapping class group of an orientable surface corresponding to the relation~(3)~(a) with $f\in\T$ in the main theorems can be reduced to only all the commutativity relations and all the braid relations.
However, in the non-orientable case, we do not know whether or not  the same holds.
So we have a natural problem as follows.

\begin{prob}
Give an infinite presentation of $\M(N_{g,n})$ whose relations are more simple for $g\geq1$ and $n\geq0$.
\end{prob}

It is known that $\M(N_{g,n})$ can not be generated by only either Dehn twists or crosscap slides for $g\geq2$ and $n\geq0$ (see \cite{Li1, Li2}).
On the other hand, Le\'sniak-Szepietowski~\cite{LS} showed that $\M(N_{g,n})$ can be generated by only crosscap transpositions for $g\geq7$ and $n\geq0$.
So we have a natural problem as follows.

\begin{prob}
Give an infinite presentation of $\M(N_{g,n})$ with all crosscap transpositions for $g\geq7$ and $n\geq0$.
\end{prob}

It is known that $\M(N_{1,0})$ and $\M(N_{1,1})$ are trivial (see \cite{E}).
In addition, for $g\geq2$ and $n\leq1$, a simple finite presentation of $\M(N_{g,n})$ was given (see \cite{Li1, BC, St1, Sz, PS}).
Moreover, Omori and the author~\cite{KO1} gave a finite presentation of $\M(N_{g,n})$ for $g\geq1$ and $n\geq2$.
However this presentation is very complicated.
On the other hand, there is a simple finite presentation for the mapping class group of a compact orientable surface (see~\cite{G}).
So we have a natural problem as follows.

\begin{prob}
Give a simple finite presentation of $\M(N_{g,n})$ with Dehn twists and crosscap slides, or with Dehn twists and crosscap transpositions (resp. with crosscap transpositions) for $g\geq1$ (resp. $g\geq7$) and $n\geq0$.
\end{prob}


\end{document}